\newcommand{\uloopr}[1]{\ar@'{@+{[0,0]+(-4,5)}@+{[0,0]+(0,10)}@+{[0,0] +(4,5)}}^{#1}}
\newcommand{\uloopd}[1]{\ar@'{@+{[0,0]+(5,4)}@+{[0,0]+(10,0)}@+{[0,0]+ (5,-4)}}^{#1}}
\newcommand{\dloopr}[1]{\ar@'{@+{[0,0]+(-4,-5)}@+{[0,0]+(0,-10)}@+{[0, 0]+(4,-5)}}_{#1}}
\newcommand{\dloopd}[1]{\ar@'{@+{[0,0]+(-5,4)}@+{[0,0]+(-10,0)}@+{[0,0 ]+(-5,-4)}}_{#1}}
\newcommand{\luloop}[1]{\ar@'{@+{[0,0]+(-8,2)}@+{[0,0]+(-10,10)}@+{[0, 0]+(2,2)}}^{#1}}
\newtheorem{lem}{Lemma}[section]
\newtheorem{theor}[lem]{Theorem}
\newtheorem{prop}[lem]{Proposition}
\theoremstyle{definition}
\newtheorem{defi}[lem]{Definition}
\newtheorem{exem}[lem]{Example}
\newtheorem{rema}[lem]{Remark}
\newcommand{\nc}{\newcommand}
\nc{\fin}{\mathrm{fin}}
\nc{\idn}{\mathrm{id}}
\nc{\todd}{\mathrm{td}}
\nc{\iden}{\mathrm{id}}
\nc{\op}{\mathrm{op}}
\nc{\spect }{\mathrm{Spec\ \ }}
\nc{\logg}{\mathrm{log \ }}
\nc{\set}{\mathrm{set}}
\nc{\pr}{\mathrm{Pr}}
\nc{\chern}{\mathrm{ch}}
\nc{\degg}{\mathrm{deg}}
\nc{\gr}{\mathrm{Gr }}
\nc{\spec}{\mathrm{Spec \ }}
\nc{\specf}{\mathrm{Spec}}
\nc{\Hom}{\mathrm{Hom}}
\nc{\range}{\mathrm{range \  }}
\nc{\kernel}{\mathrm{ker\ }}
\nc{\rk}{\mathrm{rank}}
\nc{\gl}{\mathrm{GL}}
\nc{\expo}{\mathrm{exp \ }}
\nc{\modulo}{\mathrm{mod \ }}
\nc{\homo}{\mathrm{Hom}}
\nc{\inte}{\mathrm{int}}
\nc{\oct}{\mathrm{Oct}}
\nc{\dist}{\mathrm{dist}}
\nc{\modd}{\mathrm{mod}}
\nc{\pic}{\mathrm{Pic}}
\begin{document}

\title[On the algebraic K-theory of $\widehat{\spec {\mathbb Z}^N}$]{On the algebraic K-theory of $\widehat{\spec {\mathbb Z}^N}$}%
\author{Stella Anevski}

\thanks{{\em Supported by the Danish National Research Foundation through the Centre for Symmetry and Deformation.}}
\date{}

\maketitle
\begin{abstract}
In his thesis, N. Durov develops a theory of algebraic geometry in which schemes are locally determined by {\em commutative algebraic monads}. In this setting, one is able to construct the Arakelov geometric compactification $\widehat{\spec{\mathbb Z}}$ of the spectrum of the ring of integers in a purely algebraic fashion. More precisely, the object $\widehat{\spec{\mathbb Z}}$ arises as the limit of a certain projective system of generalized schemes. We study the constituents $\widehat{\spec {\mathbb Z}^N}$, of this projective system, and compute their algebraic K-theory.
\end{abstract}
\section*{Introduction} 
\noindent Consider the category of locally ringed spaces ${\mathcal L}{\mathcal R}{\mathcal S}$. The objects of ${\mathcal L}{\mathcal R}{\mathcal S}$ are pairs $(X,{\mathcal O}_X)$, consisting of a topological space $X$ and a sheaf ${\mathcal O}_X$ of commutative rings with unit on $X$. The stalk ${\mathcal O}_{X,x}$ of ${\mathcal O}_X$ at any given point $x\in X$ is required to be a local ring. A morphism $(X,{\mathcal O}_X)\to (Y,{\mathcal O}_Y)$ in ${\mathcal L}{\mathcal R}{\mathcal S}$ is a pair $(f,f^\sharp)$, consisting of a continuous map $X\to Y$ and a morphism $f^\sharp: {\mathcal O}_Y\to f_{\ast}{\mathcal O}_X$ of sheaves on $Y$, such that the induced homomorphism $f_p^\sharp:{\mathcal O}_{Y,f(x)}\to {\mathcal O}_{X,x}$ is a local homomorphism of local rings.\\
\\
Let ${\mathcal Rings}$ be the category of commutative rings with unit and consider the functor of global sections
\begin{eqnarray*}
\Gamma: {\mathcal L}{\mathcal R}{\mathcal S}&\to & {\mathcal Rings},\\
(X,{\mathcal O}_X)&\mapsto &{\mathcal O}_X(X).
\end{eqnarray*}
There exists a functor $\specf: {\mathcal Rings}\to {\mathcal L}{\mathcal R}{\mathcal S}$, such that
\begin{eqnarray*}
\Hom_{{\mathcal L}{\mathcal R}{\mathcal S}}(X, \spec R)\simeq \Hom_{\mathcal Rings}(R, \Gamma (X,{\mathcal O}_X)),
\end{eqnarray*}
for any choice of $X\in {\mathcal L}{\mathcal R}{\mathcal S}$ and $R\in {\mathcal Rings}$. An {\em affine scheme} is a locally ringed space of the form $\spec R$, for some ring $R\in {\mathcal Rings}$. A {\em scheme} is a locally ringed space admitting an open cover of affine schemes.\\
 \\
 The scheme $\spec R$ has the following explicit description: The points of the underlying topological space of $\spec R$ are in bijection with the prime ideals of the ring $R$ and a base for the topology is given by the open sets
 \begin{eqnarray*}
 D_r=\{x\in \spec R; r\notin x\},
 \end{eqnarray*}
 for $r\in R$. The structure sheaf ${\mathcal O}_{\spec R}$ is determined by its values on the basis sets:
 \begin{eqnarray*}
 {\mathcal O}_{\spec R}(D_r)=R[r^{-1}].
 \end{eqnarray*}
\newpage
\noindent
 In this paper, we study generalized schemes in the sense of N. Durov (cf. \cite{durov}). A generalized scheme is locally determined by a {\em commutative algebraic monad} - a certain type of endofunctor on the category of sets. In sections \ref{gencom} and \ref{gensch}, we summarize the basics of this extended version of algebraic geometry. In particular, the construction of the prime spectrum of a generalized ring will be in complete parallel with the classical case discussed above. \\
  \\
 One of the motivations for considering an extension of classical algebraic geometry comes from Arakelov geometry. Inspired by the analogy between number fields and function fields of curves, one aims to construct an object $\widehat{\spec {\mathbb Z}}$, serving as a "compactification" of the spectrum of the integers. This object is supposed to be in analogy with projective space ${\mathbb P}^1_{k}$ over a field.  However, the stalk at infinity of $\widehat{\spec {\mathbb Z}}$ would have to be the set
 \begin{eqnarray*} \label{stalk}
 {\mathcal O}_{\widehat{\spec {\mathbb Z}},\infty}=\{\alpha\in {\mathbb R}; |\alpha|\leq 1\},
 \end{eqnarray*}
 which is a (multiplicative) commutative monoid, but not a ring. This means that $\widehat{\spec {\mathbb Z}}$ lacks a description in the framework of locally ringed spaces.\\
  \\
 Changing the setting from locally ringed spaces to structured spaces given locally by commutative algebraic monads, the object $\widehat{\spec {\mathbb Z}}$ admits a purely algebraic construction, as is shown in \S {\bf 7} of \cite{durov}. Section  \ref{algk} is devoted to the study of the constituents $\widehat{\spec {\mathbb Z}^N}$ of the projective system with limit $\widehat{\spec {\mathbb Z}}$. We describe the construction of $\widehat{\spec {\mathbb Z}^N}$, for $N>1$, and compute its algebraic K-theory. Our main result is:\\
  \\
 {\bf Theorem \ref{ktheory}}
     \begin{eqnarray*}
  K^0(\widehat{\spec {\mathbb Z}^N})={\mathbb Z}\times \pic(\widehat{\spec {\mathbb Z}^N})={\mathbb Z}\times \log {\mathbb Z}[N^{-1}]^{\ast}_{+}.
  \end{eqnarray*}
   \\
Note the similarity with Durov's computation (\cite{durov} Theorem {\bf 10.5.22}):
  \begin{eqnarray*}
  K^0(\widehat{\spec {\mathbb Z}})={\mathbb Z}\times \pic(\widehat{\spec {\mathbb Z}})={\mathbb Z}\times \log {\mathbb Q}^{\ast}_{+}.
  \end{eqnarray*}
  Indeed, the fundamentals of the proof are the same, and the only new result we need is:\\
   \\
  {\bf Theorem \ref{mainth}}\\
  Let $A_N$ be the generalized ring
  \begin{eqnarray*}
  {\bf n}\mapsto \{(\lambda_1,...,\lambda_n); \lambda_i\in {\mathbb Z}[N^{-1}] \ \ \sum_{i=1}^n |\lambda_i|\leq 1\}.
  \end{eqnarray*}
  Then any finitely generated projective module over $A_N$ is free.
\section{Generalized rings} \label{gencom}
\noindent In this section, we introduce our notion of generalized rings. We want the definition to include both commutative monoids and commutative rings with unit as special cases. In particular the object ${\mathcal O}_{\widehat{\spec {\mathbb Z}},\infty}$, i.e. the stalk at infinity of $\widehat{\spec {\mathbb Z}}$, should exist as a generalized ring. We begin in great generality, by considering monads on any category.
\newpage
\noindent
\begin{defi}
Let ${\mathcal C}$ be a category. A {\em monad} on ${\mathcal C}$ is a triple $M=(M,\mu,\epsilon)$, consisting of an endofunctor $M:{\mathcal C}\to {\mathcal C}$ and two natural transformations $\mu:M\circ M\to M$ and $\epsilon:\idn_{\mathcal C}\to M$, such that the following diagrams commute:
\begin{eqnarray*}
\begin{tikzpicture} \matrix(m)[matrix of math nodes, row sep=2.6em, column sep=2.8em, text height=1.5ex, text depth=0.25ex] {M\circ M\circ M(X) &M\circ M(X)\\ M\circ M(X)&M(X)\\}; \path[->,font=\scriptsize,>=angle 90] (m-1-1) edge node[auto] {$M(\mu_X)$} (m-1-2)
edge node[auto] {$\mu_{M(X)}$} (m-2-1) (m-1-2) edge node[auto] {$\mu_X$} (m-2-2) (m-2-1) edge node[auto] {$\mu_X$} (m-2-2); \end{tikzpicture}
\begin{tikzpicture}[>=angle 90] \matrix(a)[matrix of math nodes, row sep=3em, column sep=2.5em, text height=1.5ex, text depth=0.25ex] {\idn_{\mathcal C}\circ M(X)&M\circ M(X)&M\circ\idn_{\mathcal C}(X)\\
&M(X)\\}; \path[->](a-1-1) edge node[above]{$\epsilon_{M(X)}$} (a-1-2); \path[->](a-1-1) edge node[below left]{$\iden_{M(X)}$}(a-2-2); \path[->](a-1-2) edge node[right]{$\mu_X$}(a-2-2); \path[->](a-1-3) edge node[above]{$M(\epsilon_X)$}(a-1-2); \path[->](a-1-3) edge node[below right]{$\iden_{M(X)}$}(a-2-2);
 \end{tikzpicture}
 \end{eqnarray*}
A {\em morphism} of monads $\phi:M_1\to M_2$ is a natural transformation of the underlying endofunctors, such that
\begin{eqnarray*}
&&\phi\circ \epsilon_{M_1}=\epsilon_{M_2}  \mbox{  and}\\
&&\phi\circ \mu_{M_1}=\mu_{M_2}\circ (\phi \cdot \phi),
\end{eqnarray*}
for any morphism $f:X\to Y$ in ${\mathcal C}$. 
\end{defi}
\begin{defi} \label{moddef}
A {\em module} over a monad $M$ is a pair $X=(X,\alpha)$, consisting of an object $X$ of ${\mathcal C}$ and a morphism $\alpha:M(X)\to X$, such that $\alpha\circ\mu_X=\alpha\circ M(\alpha)$ and $\alpha\circ \epsilon_X=\idn_X$. A {\em morphism} $f:(X,\alpha_X)\to (Y,\alpha_Y)$ of $M$-modules is an element $f\in \Hom_{\mathcal C}(X,Y)$, such that $f\circ\alpha_X=\alpha_Y\circ M(f)$.
\end{defi}
\noindent We do not want our generalized rings to be unmanageble. For this reason, we shall restrict our attention to monads on the category of sets satisfying the following finiteness condition.
\begin{defi}
An endofunctor $M$ on the category of sets is {\em algebraic} if
\begin{eqnarray*}
M(\varinjlim_i X_i)\simeq \varinjlim_i M(X_i),
\end{eqnarray*}
for any filtered family of sets $\{X_i\}_i$. A monad is {\em algebraic} if its underlying endofunctor is algebraic.
\end{defi}
\noindent Since every set can be described as a filtered limit of its finite subsets, an algebraic monad needs only be defined on the finite sets ${\bf n}=\{1,...,n\}$, $n\in {\mathbb N}$. 
\begin{exem} \label{examples} (Algebraic monads on the category of sets)\\
$(i)$ Every ring $R$ defines a monad $M_R$ on the category of sets. We map a given set $X$ to the set of functions $X\to R$ having finite support in $X$:
\begin{eqnarray*}
M_R(X)=\Hom^{fin}(X,R).
\end{eqnarray*}
$(ii)$ Let
\begin{eqnarray*}
\lVert\cdot\rVert:{\mathbb R}^n&\to &{\mathbb R},\\
(\lambda_1,...,\lambda_n)&\mapsto &\sum_{i=1}^n |\lambda_i|,
\end{eqnarray*}
be the $L^1$-norm. Consider the monad ${\mathbb Z}_{\infty}$, defined via
\begin{eqnarray*}
{\mathbb Z}_{\infty}({\bf n})=\{\lambda \in {\mathbb R}^n; \lVert \lambda\rVert \leq 1\}.
\end{eqnarray*}
Then the stalk at infinity ${\mathcal O}_{\widehat{\spec {\mathbb Z}},\infty}$ of $\widehat{\spec {\mathbb Z}}$ is the monoid ${\mathbb Z}_{\infty}({\bf 1})$.\\ 
$(iii)$ For any natural number $N>1$, we define a monad $A_N$ by letting
\begin{eqnarray*}
A_N({\bf n})=\{\lambda\in {\mathbb Z}[N^{-1}]^n; \lVert\lambda\rVert\leq 1\}.
\end{eqnarray*}
This particular monad will play a prominent role in the sequel.
\end{exem}
\noindent Finally, we would like to have analogs of classical theorems from algebraic geometry at our disposal. For this reason we will need to exclude monads arising from non-commutative rings, which is done by means of the following notion of commutativity. It is designed in such a way that the monad $M_R$ in example \ref{examples} $(i)$ is commutative if and only if the ring $R$ is commutative.
\begin{defi} Let $M$ be an algebraic monad. The elements $t\in M({\bf n})$ and $s\in M({\bf m})$ {\em commute} if for every $M$-module $X$ and every family $\{x_{ij}\}\subset X$, $1\leq i\leq n$, $1\leq j\leq m$�,
\begin{eqnarray*}
t(s(x_{11},....,x_{1m}),...,s(x_{n1},...,x_{nm}))=s(t(x_{11},...,x_{1n}),...,t(x_{m1},...,x_{mn})). 
\end{eqnarray*}
The monad $M$ is {\em commutative} if all elements of $\coprod_{n\geq 0}M({\bf n})$ commute.
\end{defi}
\begin{defi}
A {\em generalized ring} is a commutative algebraic monad on the category of sets. A {\em morphism} of generalized rings is a morphism of the underlying monads. We denote the category of generalized rings by ${\mathcal G}{\mathcal Rings}$.
\end{defi}
\noindent We see that if we add an assumption that the ring $R$ in example \ref{examples} $(i)$ is commutative, then all monads in the example are generalized rings.
\subsection{Modules over generalized rings} 
We have already defined modules over monads (cf. Definition \ref{moddef}). By a {\em module over a generalized ring}, we simply mean a module over the underlying monad. \\
 \\
 In the case of the monad $M_R$ arising from a commutative ring with unit as in example \ref{examples} $(i)$, the notions $M_R$-module and $R$-module coincide. Indeed, let $X$ be a set, and for each $x\in X$, let $\chi_x$ be the characteristic function of $x$, that is
\begin{eqnarray*}
x&\mapsto &1,\\
y&\mapsto &0, \mbox{ if $y\neq x$}.
\end{eqnarray*}
Now assume that $(X,\alpha)$ is a module over $M_R$. Defining
\begin{eqnarray*}
x+y:=\alpha(\chi_x + \chi_y), \mbox{ and}\\
\lambda x:= \alpha (\lambda \chi_x),
\end{eqnarray*}
we obtain an $R$-module. Conversely, given an $R$-module $X$, the map
\begin{eqnarray*}
\lambda_1 \chi_{x_1}+...+\lambda_n \chi_{x_n}\mapsto \lambda_1 x_1+...+\lambda_n x_n,
\end{eqnarray*}
defines a $M_R$-module structure on $X$. We recover the underlying set of $R$ as $M_R({\bf 1})$ and the multiplication by regarding $R$ as a left $M_R$-module.\\
 \\
 Next, we consider some properties of modules.
\begin{defi}
Let $M$ be a monad. An $M$-module $P$ is {\em projective} if any surjection $\pi: X\to P$ of $M$-modules admits a section $\sigma:P\to X$. An $M$-module $F$ is {\em free} if there exists an $n\in {\mathbb N}$ such that  $\homo_M(F, X)\simeq X^n$ for any $M$-module $X$. This entails $F\simeq M({\bf n})$ (cf. \cite{durov} {\bf 0.4.10}). A projective module is {\em finitely generated} if it admits a surjection from a free module.
\end{defi}
\noindent When $R$ is a commutative ring, the usual notions {\em projective}, {\em free} and {\em finitely generated} coincides with the ones just defined under the correspondence between $R$-modules and $M_R$-modules described above.\\
 \newpage
 \noindent {\bf Example \ref{examples}} (continued) \label{pidth}\\
$(i)$ Recall that if $R$ is a principal ideal domain, then every finitely generated projective $R$-module is free (cf. \cite{lang} III: Theorem $7.1$). Examples of principal ideal domains are fields, polynomial rings over fields, and the ring ${\mathbb Z}$ of integers. In particular, every finitely generated projective module over the monad $M_{\mathbb Z}$ is free.\\
$(ii)$ N. Durov shows that any finitely generated projective module over ${\mathbb Z}_{\infty}$ is free (cf. \cite{durov} \S {\bf 10.4}). More precisely, a finitely generated projective ${\mathbb Z}_{\infty}$-module is an octahedron
\begin{eqnarray*}
{\mathbb Z}_{\infty}({\bf n})=\{\lambda\in {\mathbb R}^n; \lVert \lambda \rVert \leq 1\},
\end{eqnarray*}
for some $n\in {\mathbb N}$. \\
$(iii)$ In \S \ref{algk}, we show that any finitely generated projective module over $A_N$ is free (cf. theorem \ref{mainth}).
\section{Generalized schemes} \label{gensch}
\noindent Let $M$ be a generalized ring, and note that the set $M({\bf 1})$ has the structure of a commutative monoid. This allows us to consider multiplicative systems in $M({\bf 1})$.
\begin{defi}
An {\em ideal} in $M$ is an $M$-submodule of $M({\bf 1})$. An ideal $I$ is {\em prime} if $M({\bf 1})\setminus I$ is a multiplicative system. An ideal $I$ is {\em maximal} if it is maximal with respect to the partial order defined by inclusion of proper ideals of $M$.
\end{defi}
\noindent Given a multiplicative system $S\subset M({\bf 1})$, we construct the {\em localization} of $M$ with respect to $S$:
\begin{eqnarray*}
M[S^{-1}]({\bf n})=\{(a,s)\in M({\bf n})\times S\}/ \sim,
\end{eqnarray*}
where $(a,s)\sim (b,t)$ iff there exists an $u\in S$ such that $uta=usb$. The localization is initial in the category of pairs $(B,\rho)$, such that $B$ is a generalized ring and $\rho:M\to B$ is a morphism such that all elements of $\rho_1(S)\subset B({\bf 1})$ are invertible in $B({\bf 1})$ (cf. \cite{durov} {\bf 6.1.2} and {\bf 6.1.7}).\\
 \\
We equip the set $\spec M$ of prime ideals of $M$ with the {\em Zariski topology} by picking a basis of open sets consisting of the sets
\begin{eqnarray*}
D_m=\{x\in \spec M; m\notin x\},
\end{eqnarray*}
for $m\in M({\bf 1})$.\\
 \\
 Then we equip the topological space $\spec M$ with a sheaf ${\mathcal O}_{\spec M}$ of generalized rings:
\begin{eqnarray*}
 {\mathcal O}_{\spec M}(D_m)=M[ m^{-1}].
\end{eqnarray*}
\begin{defi} 
Let $M$ be a generalized ring. The {\em prime spectrum} of $M$ is the pair 
\begin{eqnarray*}
\spec M=(\spec M, {\mathcal O}_{\spec M}).
\end{eqnarray*} 
\end{defi}
\noindent This construction should be compared with the classical construction of the prime spectrum of a commutative ring with unit.
\newpage
\noindent {\bf Example \ref{examples}} (continued)\\
$(i)$ If $R$ is a commutative ring with unit, then the prime spectrum of $M_R$ coincides with the prime spectrum of $R$. In the sequel, we will therefore denote $M_R$ by $R$ and $\spec M_R$ by $\spec R$.\\
$(ii)$ ${\mathbb Z}_{\infty}$ is a {\em local generalized ring}, i.e. it has a unique maximal ideal. This is the ideal
\begin{eqnarray*}
\{\alpha\in {\mathbb R}; |\alpha|< 1\},
\end{eqnarray*}
which is also the only prime ideal of ${\mathbb Z}_{\infty}$.\\
$(iii)$ Consider the generalized ring $A_N$. By \cite{durov} {\bf 7.1.4}, the underlying topological space of $\spec A_N$ is
\begin{eqnarray*}
\{(0), (p),..., \infty\}_{p\nmid N},
\end{eqnarray*}
where $\infty=\{\alpha\in {\mathbb Z}[N^{-1}]; |\alpha|<1\}$. A non-empty subset of $\spec A_N$ is open if and only if it contains $\infty$ and has finite complement.
\subsection{Generalized locally ringed spaces} \label{glrs}
Consider a morphism $(f,f^{\sharp}):(X,{\mathcal O}_X)\to (Y,{\mathcal O}_Y)$ in the category ${\mathcal L}{\mathcal R}{\mathcal S}$ of locally ringed spaces. Requiring the induced homomorphism of stalks $f_p^\sharp:{\mathcal O}_{Y,f(x)}\to {\mathcal O}_{X,x}$, to be a local homomorphism of local rings for any $x\in X$ is equivalent to the requirement that whenever $U\subset X$ and $V\subset Y$ are affine open subschemes such that $f(U)\subset V$, the restricted morphism
\begin{eqnarray*}
f_{U,V}:U\to V,
\end{eqnarray*}
is induced by a ring homomorphism ${\mathcal O}_X(V)\to {\mathcal O}_X(U)$. This leads to the following definition of the category of generalized locally ringed spaces.
\begin{defi}
A {\em generalized locally ringed space} is a pair $(X,{\mathcal O}_X)$, consisting of a topological space $X$ and a sheaf of generalized rings ${\mathcal O}_X$ on $X$. The stalk ${\mathcal O}_{X,x}$ of $X$ at any given point $x\in X$ is required to be a local generalized ring. A {\em morphism} $(X,{\mathcal O}_X)\to (Y,{\mathcal O}_Y)$ of generalized locally ringed spaces is a pair $(f,f^\sharp)$, consisting of a continuous map $f:X\to Y$ and a morphism $f^\sharp: {\mathcal O}_Y\to f_{\ast}{\mathcal O}_X$ of sheaves on $Y$. Moreover, we require that for any open prime spectra $\spec M_1\subset X$ and $\spec M_2\subset Y$, such that $f(\spec M_1)\subset \spec M_2$, the restricted morphism $f:\spec M_1\to \spec M_2$ is induced by a homomorphism $M_2\to M_1$ of generalized rings. We denote the category of generalized locally ringed spaces by ${\mathcal G}{\mathcal L}{\mathcal R}{\mathcal S}$.
\end{defi}
\begin{defi}
A {\em generalized scheme} is a generalized locally ringed space which admits an open cover by prime spectra of generalized rings.
\end{defi}
\noindent An immediate consequence of this definition is that morphisms of affine schemes $\spec M_1\to \spec M_2$ are in one-to-one correspondence with generalized ring homomorphisms $M_2\to M_1$:
\begin{eqnarray*}
\Hom_{{\mathcal G}{\mathcal L}{\mathcal R}{\mathcal S}}(\spec M_1,\spec M_2)\simeq \Hom_{{\mathcal G}{\mathcal Rings}}(M_2,M_1).
\end{eqnarray*}
More generally (cf. \cite{durov} {\bf 6.5.2}), if we let $\Gamma:{\mathcal G}{\mathcal L}{\mathcal R}{\mathcal S}\to {\mathcal G}{\mathcal Rings}$ be the functor of global sections, then the functor $\specf: {\mathcal G}{\mathcal Rings}\to {\mathcal G}{\mathcal L}{\mathcal R}{\mathcal S}$, satisfies
\begin{eqnarray*}
\Hom_{{\mathcal G}{\mathcal L}{\mathcal R}{\mathcal S}}(X, \spec R)\simeq \Hom_{{\mathcal G}{\mathcal Rings}}(R, \Gamma (X,{\mathcal O}_X)),
\end{eqnarray*}
in complete analogy to the classical situation described in the introduction.
\newpage
\noindent {\bf Example \ref{examples}} $(iii)$ (continued)\\
By \cite{durov} {\bf 7.1.2}, the ring ${\mathbb Z}[N^{-1}]$ is the localization of the generalized ring $A_N$ with respect to the multiplicative system generated by $N^{-1}$. Since ${\mathbb Z}[N^{-1}]$ is also the localization of $\mathbb Z$ with respect to the multiplicative system generated by $N$, this allows us to glue $\spec A_N$ and $\spec {\mathbb Z}$ along the common open subset $\spec{\mathbb Z}[N^{-1}]$. The resulting generalized scheme will be denoted by $\widehat{\spec {\mathbb Z}^N}$:
\begin{eqnarray*}
\widehat{\spec {\mathbb Z}^N}=\spec A_N\coprod_{\spec \mathbb Z[N^{-1}]}\spec {\mathbb Z}.
\end{eqnarray*}
As a set $\widehat{\spec {\mathbb Z}^N}\simeq \spec {\mathbb Z}\cup \{\infty\}$. The point $(0)$ is generic, and hence contained in any non-empty open subset of $\widehat{\spec {\mathbb Z}^N}$. The points $\infty$ and $(p)$ for $p|N$ are closed. The remaining points $(p)$ are tangled with $\infty$, in the sense that $\overline{(p)}=\{p,\infty\}$ whenever $p\nmid N$. A non-empty subset $U\subset \widehat{\spec {\mathbb Z}^N}$ is open if and only if it contains $(0)$, its complement is finite, and if it either doesn't contain $\infty$ or contains all $p\nmid N$ (cf. \cite{durov} {\bf 7.1.6}).\\
\subsection{Vector bundles}
Given a generalized locally ringed space $(X,{\mathcal O}_X)$, we can consider the category ${\mathcal O}_X-\modd$ of sheaves of ${\mathcal O}_X$-modules. Objects are sheaves ${\mathcal E}$ of modules on $X$ such that ${\mathcal E}(U)$ is an ${\mathcal O}_X(U)$-module for any open set $U\subset X$. Furthermore, for each inclusion of open sets $V\subset U$, the restriction morphism ${\mathcal E}(U)\to {\mathcal E}(V)$ is required to be compatible with the module structures via the morphism ${\mathcal O}_X(U)\to {\mathcal O}_X(V)$ of generalized rings. A {\em morphism} ${\mathcal E}\to {\mathcal F}$ of sheaves of ${\mathcal O}_X$-modules is a morphism of sheaves, such that for each open set $U\subset X$, the map ${\mathcal E}(U)\to {\mathcal F}(U)$ is a morphism of ${\mathcal O}_X(U)$-modules.
\begin{defi}
A {\em vector bundle} over a generalized locally ringed space $(X,{\mathcal O}_X)$ is an ${\mathcal O}_X$-module ${\mathcal E}$, such that ${\mathcal E}_{|U_i}\simeq {\mathcal O}_{X|U_i}({\bf n}_i)$ as an ${\mathcal O}_{X|U_i}$-module, for some open cover $U_i$ of $X$ and some integers $n_i\geq 0$. If all $n_i$ can be chosen to have the same value $n$, ${\mathcal E}$ is said to have {\em rank $n$}. A {\em line bundle} over $X$ is a vector bundle of rank $1$.
\end{defi}
\begin{rema}
If ${\mathcal E}$ is a vector bundle over a generalized scheme $X$, then the set $\Gamma(U,{\mathcal E})$ of sections of ${\mathcal E}$ over $U$ is a finitely generated projective module for any affine open $U\subset X$. The converse is not always true (cf. \cite{durov} {\bf 6.5.29}).
\end{rema}
\begin{defi}
A {\em morphism} $u:{\mathcal E}' \to {\mathcal E}$ of vector bundles over a generalized locally ringed space $(X,{\mathcal O}_X)$ is a morphism of the underlying ${\mathcal O}_X$-modules. The morphism $u$ is a {\em cofibration} if it can be locally presented as a retract of a standard embedding $e:{\mathcal E}'_{|U_i}\to {\mathcal E}'_{|U_i}\oplus {\mathcal O}_{X|U_i}({\bf n})$, i.e. if there exist morphisms 
\begin{eqnarray*}
j:&&{\mathcal E}_{|U_i}\to {\mathcal E}'_{|U_i}\oplus {\mathcal O}_{X|U_i}({\bf n})  \mbox{ and}\\
q:&&{\mathcal E}'_{|U_i}\oplus {\mathcal O}_{X|U_i}({\bf n})\to {\mathcal E}_{|U_i}, 
\end{eqnarray*}
of ${\mathcal O}_{X|U_i}$-modules such that $q\circ j=\iden$, $e=j\circ u$ and $u=q\circ e$.
\end{defi}
\noindent If the category ${\mathcal O}_X-\modd$ is pointed, we define the {\em cofiber} ${\mathcal E}''$ of a cofibration $u:{\mathcal E}' \to {\mathcal E}$ to be the pushout of $u$ along the zero morphism in ${\mathcal O}_X-\modd$:
\begin{eqnarray*}
\begin{tikzpicture} \matrix(m)[matrix of math nodes, row sep=2.6em, column sep=2.8em, text height=1.5ex, text depth=0.25ex] {{\mathcal E}' &{\mathcal E}\\ 0&{\mathcal E}''\\}; \path[->,font=\scriptsize,>=angle 90] (m-1-1) edge node[auto] {$u$} (m-1-2)
edge node[auto] {$$} (m-2-1) (m-1-2) edge node[auto] {$$} (m-2-2) (m-2-1) edge node[auto] {$0$} (m-2-2); \end{tikzpicture}
\end{eqnarray*}
\begin{defi} Let $(X, {\mathcal O}_X)$ be a generalized ringed space. The {\em Grothendieck group} $K^{0}(X)$ is generated by isomorphism classes $[{\mathcal E}]$ of vector bundles over $X$ modulo relations
\begin{eqnarray*}
&(i)& [\emptyset]=0, \\
&(ii)& \mbox{$[{\mathcal E}]=[{\mathcal E}']+[{\mathcal E}'']$, whenever there is a cofibration }\\
&&\mbox{${\mathcal E}'\to {\mathcal E}$ in ${\mathcal O}_X-\modd$ with cofiber ${\mathcal E}''$.}\\
\end{eqnarray*}
The {\em Picard group} $\pic(X)$ is the set of isomorphism classes of line bundles over $X$ with group operation given by the tensor product (cf. \cite{durov} {\bf 5.3.5} and {\bf 6.5.30}).
\end{defi}
\begin{lem} \label{structurelemma}
Let $(X,{\mathcal O}_X)$ be a generalized ringed space and consider the ring $R={\mathbb Z}\times \pic(X)$, with multiplication defined by $(m,x)\cdot(n,y)=(mn,nx+my)$. Assume that the map
\begin{eqnarray*}
c_1:\pic(X)&\to& K^0(X),\\
{\mathcal L}&\mapsto& [{\mathcal L}]-1,
\end{eqnarray*}
is a homomorphism of abelian groups. Then the map
\begin{eqnarray*}
\varphi: R&\to& K^0(X),\\
(n,{\mathcal L})&\mapsto& n-1+[{\mathcal L}],
\end{eqnarray*}
is a homomorphism of rings. If $K^0(X)$ is generated by line bundles, then $\varphi$ is surjective. If both ${\mathbb Z}\to K^0(X)$ and $c_1$ are injective, then $\varphi$ is injective.
\end{lem}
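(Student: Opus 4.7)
The plan is to check the three assertions in sequence—that $\varphi$ is a ring homomorphism, that it is surjective, and that it is injective—each under the corresponding hypothesis. The algebraic engine for the first two is the identity
\begin{equation*}
[\mathcal{L} \otimes \mathcal{M}] = [\mathcal{L}] + [\mathcal{M}] - 1
\end{equation*}
in $K^0(X)$, which is just a rewriting of the assumption $c_1(\mathcal{L} \otimes \mathcal{M}) = c_1(\mathcal{L}) + c_1(\mathcal{M})$.

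For the ring homomorphism property, I would directly expand both sides of the additive and multiplicative equations. Additivity reduces to a single invocation of the identity above. For multiplicativity, I would expand $\varphi(n,\mathcal{L}) \cdot \varphi(m,\mathcal{M}) = (n-1+[\mathcal{L}])(m-1+[\mathcal{M}])$ in the Grothendieck ring (using $[\mathcal{L}]\cdot [\mathcal{M}] = [\mathcal{L} \otimes \mathcal{M}]$) and independently expand $\varphi((n,\mathcal{L})\cdot(m,\mathcal{M})) = \varphi(nm, m\mathcal{L} + n\mathcal{M})$ via the $c_1$-homomorphism property; both collapse to $nm - n - m + m[\mathcal{L}] + n[\mathcal{M}]$. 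The unit condition $\varphi(1,0) = 1$ is immediate.

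Surjectivity is the shortest step. Assuming $K^0(X)$ is generated by line bundles, any element has the form $\sum_i n_i [\mathcal{L}_i]$. Substituting $[\mathcal{L}_i] = 1 + c_1(\mathcal{L}_i)$ and collapsing $\sum_i n_i c_1(\mathcal{L}_i) = c_1(\sum_i n_i \mathcal{L}_i)$ via the homomorphism hypothesis rewrites the element as $\varphi(\sum_i n_i,\, \sum_i n_i \mathcal{L}_i)$.

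The hard part is injectivity, because the two injectivity hypotheses only control the summands $\mathbb{Z}$ and $\pic(X)$ of $R$ individually, whereas the relation $\varphi(n,\mathcal{L}) = 0$ mixes them into the single equation $c_1(\mathcal{L}) = -n$ in $K^0(X)$. My plan is to exploit the \emph{ring} structure that has just been established: since $\varphi$ is a ring homomorphism, $\varphi((n,\mathcal{L})^2) = 0$ as well. Computing $(n,\mathcal{L})^2 = (n^2, 2n\mathcal{L})$ in $R$ and then expanding $\varphi$ on this element, the identity above together with the relation $c_1(\mathcal{L}) = -n$ feeds back to yield $-n^2 = 0$ in $K^0(X)$. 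Injectivity of $\mathbb{Z} \to K^0(X)$ then forces $n=0$, after which $c_1(\mathcal{L}) = 0$, and injectivity of $c_1$ finally gives $\mathcal{L} = 0$.
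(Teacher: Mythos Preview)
The paper does not actually prove this lemma; its entire proof reads ``See \cite{durov} {\bf 10.5.21}.'' Your argument is correct and self-contained, so in that sense you have supplied more than the paper does.

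The ring-homomorphism and surjectivity verifications are routine once the identity $[\mathcal{L}\otimes\mathcal{M}] = [\mathcal{L}] + [\mathcal{M}] - 1$ is in hand, and your expansions are accurate. The only step with real content is injectivity, and your squaring trick handles it cleanly: from $c_1(\mathcal{L}) = -n$ and $\varphi((n,\mathcal{L})^2)=0$ you correctly extract $-n^2 = 0$ in $K^0(X)$, then invoke the two injectivity hypotheses in turn. A marginally shorter phrasing of the same idea: since $(0,\mathcal{L})^2 = (0,0)$ in $R$, the ring-homomorphism property already gives $c_1(\mathcal{L})^2 = \varphi(0,\mathcal{L})^2 = \varphi(0,0) = 0$ for every line bundle, so squaring the relation $c_1(\mathcal{L}) = -n$ immediately yields $n^2 = 0$ in $K^0(X)$.
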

\begin{proof}
See \cite{durov} {\bf 10.5.21}.
\end{proof}
\section{Algebraic K-theory of $\widehat{\spec {\mathbb Z}^N}$} \label{algk}
\noindent Recall the construction of $\widehat{\spec {\mathbb Z}^N}$ from example \ref{examples} in \S \ref{glrs}. In this section, we compute the algebraic K-theory of $\widehat{\spec {\mathbb Z}^N}$. This is done using the methods of \cite{durov} \S {\bf 10.5}, where the algebraic K-theory of $\widehat{\spec {\mathbb Z}}$ is computed. The result which allows us to apply these methods in our case is theorem \ref{mainth}, stating that any finitely generated projective $A_N$-module is free.
\subsection{$A_N$-modules}
Let $P$ be a finitely generated projective $A_N$-module and let $\pi:A_N({\bf n})\to P$ be the surjective map from the free $A_N$-module on $n$ generators. We assume that $n$ is minimal. Choose a section $\sigma:P\to A_N({\bf n})$ and put $a=\sigma\circ\pi$. Then $a^2=a$ and $a$ is given by a matrix $A=(a_{ij})$, with entries $a_{ij}\in A_N({\bf 1})$. For $1\leq j\leq n$, let $u_j$ be the image of the standard generator $e_j$ of $A_N({\bf n})$. Then
\begin{eqnarray*}
\lVert u_j\rVert=\sum_{i=1}^{n} |a_{ij}|\leq 1, \mbox{ for all $1\leq j\leq n$},
\end{eqnarray*}
since $u_j=a(e_j)\in A_N({\bf n})$.
\begin{lem} \label{norm}
We have $\lVert u_i\rVert = 1$, for all $1\leq i\leq n$.
\end{lem}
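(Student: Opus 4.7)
The plan is to argue by contradiction: suppose $\lVert u_k\rVert<1$ for some index $k$, and then construct a surjection $A_N(\mathbf{n-1})\to P$, contradicting the minimality of $n$.

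First I would record the identity forced by idempotency. Applying $a$ to $u_k=a(e_k)=\sum_ia_{ik}e_i$ and using $a\circ a=a$ together with the $A_N$-linearity of $a$ gives
\begin{eqnarray*}
u_k \ = \ a(u_k) \ = \ \sum_{i=1}^{n}a_{ik}\,u_i
\end{eqnarray*}
inside $P\subset A_N(\mathbf{n})$; this is a valid $A_N$-combination since $\sum_i|a_{ik}|=\lVert u_k\rVert\leq 1$. Isolating the $u_k$-term, $(1-a_{kk})u_k=\sum_{i\neq k}a_{ik}u_i$ as an identity in $\mathbb{R}^n$. The key reduction is to upgrade this to a genuine $A_N$-combination expressing $u_k$ purely in terms of $\{u_i:i\neq k\}$; once this is achieved, the map $A_N(\mathbf{n-1})\to P$ sending the $i$-th free generator to $p_i$ (for $i\neq k$) is a well-defined $A_N$-module surjection, yielding the desired contradiction with minimality and hence $\lVert u_k\rVert=1$.

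When $a_{kk}=0$ the reduction is immediate: the identity above already reads $u_k=\sum_{i\neq k}a_{ik}u_i$ with total weight $\lVert u_k\rVert<1$. The main case is $a_{kk}\neq 0$. Here I would iterate the substitution $u_k=a_{kk}u_k+\sum_{i\neq k}a_{ik}u_i$ to obtain, for each $M\geq 1$, the valid $A_N$-identity
\begin{eqnarray*}
u_k \ = \ a_{kk}^{M}\,u_k \ + \ \bigl(1+a_{kk}+\cdots+a_{kk}^{M-1}\bigr)\sum_{i\neq k}a_{ik}\,u_i,
\end{eqnarray*}
whose coefficients all lie in $\mathbb{Z}[N^{-1}]$ and whose total weight one checks (by induction on $M$) to be bounded by $\lVert u_k\rVert$, hence strictly less than $1$.

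The main obstacle is that the $u_k$-coefficient $a_{kk}^M$ is nonzero for every finite $M$ since $\mathbb{Z}[N^{-1}]$ contains no nonzero nilpotents, so finite iteration alone does not eliminate $u_k$. The underlying difficulty is a rationality constraint: over $\mathbb{R}$ one would simply divide by $1-a_{kk}$, but $(1-a_{kk})^{-1}$ need not lie in $\mathbb{Z}[N^{-1}]$. I would resolve this by coupling the iterated relation above with further relations coming from the remaining columns of $A$ and the full idempotency condition $\sum_ja_{ij}a_{jk}=a_{ik}$, producing a finite $\mathbb{Z}[N^{-1}]$-linear combination of valid $A_N$-identities in which the $u_k$-coefficient cancels. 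The strict slack $\lVert u_k\rVert<1$ provides exactly the room needed to keep the total weight of the resulting combination at most $1$, so that the combined identity remains a legitimate $A_N$-combination. The existence of such a combination places $u_k$ in the $A_N$-submodule generated by $\{u_i:i\neq k\}$, produces the surjection $A_N(\mathbf{n-1})\to P$, and contradicts the minimality of $n$.
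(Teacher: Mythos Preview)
Your argument has a genuine gap in the case $a_{kk}\neq 0$. You correctly identify the obstruction: the geometric-series limit $\sum_{m\geq 0}a_{kk}^m=(1-a_{kk})^{-1}$ need not lie in ${\mathbb Z}[N^{-1}]$, so finite iteration never eliminates the $u_k$-term. Your proposed fix --- ``coupling the iterated relation with further relations coming from the remaining columns of $A$'' so that the $u_k$-coefficient cancels --- is not a proof but a hope. You do not exhibit any such combination, and there is no evident mechanism producing one: the other columns give relations $u_j=\sum_i a_{ij}u_i$ whose $u_k$-coefficients are the entries $a_{kj}$, and there is no reason an integral ${\mathbb Z}[N^{-1}]$-combination of these should annihilate the $u_k$-coefficient while keeping total weight at most $1$. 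The slack $\lVert u_k\rVert<1$ controls norms, not denominators; it does not by itself manufacture the divisibility needed to clear $(1-a_{kk})$.

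The paper avoids this arithmetic trap entirely by arguing in a different direction. Rather than trying to write $u_k$ in terms of $\{u_i:i\neq k\}$, it sets $I=\{j:\lVert u_j\rVert=1\}$ and analyses the equality case of the contraction inequality $\lVert Ax\rVert\leq\lVert x\rVert$. Since $Au_i=u_i$, equality holds for every $u_i$, and the equality analysis forces $(u_i)_j=0$ whenever $j\notin I$. Thus every $u_i$ lies in the span of $\{e_j:j\in I\}$, hence every generator $p_i=\pi(u_i)$ lies in the $A_N$-span of $\{p_j:j\in I\}$, and minimality of $n$ gives $I=\mathbf{n}$. The point is that this argument works coordinate-wise with the given matrix entries --- no inversion, no iteration, no limits --- so the rationality constraint you ran into never arises.
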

\begin{proof}
Let $I=\{i\in {\bf n}; \lVert u_i\rVert=1\}$. For any vector $x=(x_1,...,x_n)\in {\mathbb R}^n$, we have
\begin{eqnarray*}
\lVert Ax\rVert=\sum_{i=1}^{n} \left| \sum_{j=1}^{n} a_{ij} x_j \right| \leq \sum_{i=1}^{n}\sum_{j=1}^{n}|a_{ij}||x_j|\leq \sum_{j=1}^{n}|x_j|=\lVert x\rVert.
\end{eqnarray*}
In particular, if $\lVert Ax\rVert=\lVert x\rVert$, then
\begin{eqnarray*}
\sum_{i=1}^{n}\sum_{j=1}^{n}|a_{ij}||x_j|= \sum_{j=1}^{n}|x_j|,
\end{eqnarray*}
which is possible only if for all $j$, either $x_j=0$ or $\sum_i|a_{ij}|=1$, i.e. $j\in I$. Since $\lVert Au_i\rVert=\lVert u_i\rVert$, $u_i$ belongs to the $A_N({\bf 1})$-span of $\{u_i\}_{i\in I}$, so $I={\bf n}$, by minimality of $n$.
\end{proof}
\noindent Note that if $R=(r_{ij})$ is the matrix with entries $r_{ij}=|a_{ij}|$, then $R^2=R$. Indeed, since $A^2=A$, we have $a_{ik}=\sum_j a_{ij} a_{jk}$, so
\begin{eqnarray*}
r_{ik}=|a_{ik}|\leq \sum_{j=1}^{n}|a_{ij}||a_{jk}|=\sum_{j=1}^{n}r_{ij}r_{jk}.
\end{eqnarray*}
We also have
\begin{eqnarray*}
1=\sum_{i=1}^{n}r_{ik}\leq \sum_{i=1}^{n}\sum_{j=1}^{n} r_{ij} r_{jk}=\sum_{j=1}^{n}r_{jk}=1,
\end{eqnarray*}
and hence $r_{ik}=\sum_j r_{ij}r_{jk}$.
\begin{prop} \label{columns}
If $r_{ij}>0$, for all $1\leq i,j\leq n$, then all columns of $R$ are equal. 
\end{prop}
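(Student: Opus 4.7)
The plan is to combine strict positivity of $R$ with the idempotency $R^2=R$ and the column-sum condition in order to show that the $1$-eigenspace of $R$ is one-dimensional; since every column of $R$ lies in this eigenspace and all columns share the same entry-sum, they will then have to coincide.

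Three ingredients are already in place: by hypothesis $r_{ij}>0$ for every $i,j$; the computation just before the statement gives $R^2=R$; and Lemma~\ref{norm} together with $r_{ij}=|a_{ij}|$ yields $\sum_{i=1}^n r_{ij} = \lVert u_j\rVert = 1$ for each $j$, so that ${\bf 1}^T R = {\bf 1}^T$. Writing $c_j = Re_j$ for the $j$-th column, idempotency gives $Rc_j = R^2 e_j = Re_j = c_j$, so every column lies in $V_1 := \ker(R-I)$.

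The key step is to verify that $\dim V_1 = 1$. I would invoke the Perron--Frobenius theorem for strictly positive matrices: the spectral radius $\rho(R)$ is a simple eigenvalue with a strictly positive eigenvector. Since $R^2=R$ forces every eigenvalue of $R$ into $\{0,1\}$ and we have already exhibited $1$ as an eigenvalue via ${\bf 1}^T R = {\bf 1}^T$, we get $\rho(R)=1$ and hence $\dim V_1 = 1$. The columns $c_1,\ldots,c_n$ thus lie in a common line; each is non-zero (its entries are positive and sum to $1$), so they are all positive scalar multiples of a single vector, and having identical entry-sum forces them to be equal.

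The main obstacle, if one prefers to avoid citing Perron--Frobenius as a black box, is proving the simplicity of the eigenvalue $1$ by hand. I would do so by assuming $v := c_k - c_\ell$ is non-zero for some pair $k\neq\ell$, noting $Rv=v$ and $\sum_i v_i = 0$, and decomposing $v = v_+ - v_-$ into its positive and negative parts (both non-zero, because the entries sum to $0$). Strict positivity of $R$ forces $Rv_+$ and $Rv_-$ to be entrywise strictly positive. Comparing $Rv_+ - Rv_- = v_+ - v_-$ on the three index sets $\mathrm{supp}(v_+)$, $\mathrm{supp}(v_-)$, and their complement in ${\bf n}$, and using the identity $\lVert Rx\rVert = \lVert x\rVert$ valid for entrywise non-negative $x$ (an immediate consequence of the column-sum condition), one finds that $\sum_{i\in\mathrm{supp}(v_-)}(Rv_+)_i$ must vanish, which together with $Rv_+>0$ forces $v_- = 0$ and contradicts $\sum_i v_i = 0$ with $v_+\neq 0$.
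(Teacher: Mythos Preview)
Your argument is correct; both the Perron--Frobenius invocation and the $v_\pm$-decomposition alternative go through (for the latter, summing $Rv_+-Rv_-=v_+-v_-$ over the three index sets and combining with $\lVert Rv_\pm\rVert=\lVert v_\pm\rVert$ and $\lVert v_+\rVert=\lVert v_-\rVert$ does force $\sum_{i\in\mathrm{supp}(v_-)}(Rv_+)_i=0$, as you claim). The paper takes a different and shorter route: it works with rows rather than columns, observing that the transpose $x=(r_{i1},\ldots,r_{in})^T$ of the $i$-th row satisfies $R^Tx=x$ (from $R^2=R$) and that $R^T\mathbf{1}=\mathbf{1}$ (the column-sum condition from Lemma~\ref{norm}), so after replacing $x$ by $x-(\min_k x_k)\,\mathbf{1}$ one still has $R^Tx=x$, now with $x\ge 0$ and some $x_k=0$; then $0=x_k=\sum_j r_{jk}x_j$ with all $r_{jk}>0$ forces $x=0$, i.e.\ the original row was constant. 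Your approach situates the result within Perron--Frobenius theory, which is conceptually clarifying and immediately reusable; the paper's ``subtract the minimum'' trick is more ad hoc but entirely self-contained and a couple of lines shorter.
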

\begin{proof}
Let $x=(x_1,...,x_n)^t$ be the transpose of the $i$:th row of $R$. We show that all components of $x$ are equal. Put $m=\min_i x_i$. Replacing $x$ by $x-m(1,...,1)^t$, we may assume that all $x_i\geq 0$ and that at least one $x_k=0$. By lemma \ref{norm} and since $R^2=R$, we have $R^t x=x$, so
\begin{eqnarray*}
x_k=\sum_{i=1}^{n} r_{ik} x_i.
\end{eqnarray*}
Since all $r_{ik}$'s are strictly positive, this implies $x_i=0$, for all $1\leq i\leq n$.
\end{proof}
\noindent Now we show that if $r_{ij}>0$, for all $1\leq i,j\leq n$, then we may assume that $A=R$. First, note that any equality $|x_1+...+x_n|=|x_1|+...+|x_n|$ implies that $x_1+...+x_n$ has the same sign as $x_i$, for all $1\leq i\leq n$. In particular
\begin{eqnarray*}
\left| \sum_{j=1}^n a_{ij} a_{jk}\right| =|a_{ik}|=r_{ik}=\sum_{j=1}^n r_{ij} r_{jk}=\sum_{j=1}^n |a_{ij} a_{jk}|,
\end{eqnarray*}
so $a_{ij} a_{jk}$ has the same sign as $\sum_j a_{ij} a_{jk}=a_{ik}$. This means that $a_{ii}>0$, for all $1\leq i\leq n$. Put $\epsilon_i=\frac{a_{1i}}{|a_{1i}|}$, and let $B=(\epsilon_i a_{ij} \epsilon_j^{-1})$. Then $B$ defines the same finitely generated projective $A_N$-module as $A$. Since $\epsilon_1=1$, and $\epsilon_1 a_{1j} \epsilon_j^{-1}=|a_{1j}|=p_{1j}$, we may assume that $a_{1i}>0$, for all $1\leq i\leq n$. But then $a_{ij}>0$, for all $1\leq i,j\leq n$, since $a_{1i} a_{ij}$ has the same sign as $a_{1j}$. \\
By proposition \ref{columns}, all columns of $A$ are equal and therefore, by minimality of $n$, we have $A= \left( 1 \right)$.
\begin{theor}
If $r_{ij}=0$ for some $i,j\in {\bf n}$, then there exist finitely generated projective $A_N$-modules $P'$ and $P''$, such that
\begin{eqnarray*}
P=P'\oplus P''.
\end{eqnarray*}
\end{theor}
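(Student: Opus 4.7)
The plan is to identify a proper nonempty subset $T \subsetneq {\bf n}$ which is forward-closed for the zero-pattern of $R$, bring $A$ to block upper-triangular form over the induced partition, and then split $P$ internally as $\mathrm{Im}(A_{TT}) \oplus \mathrm{Im}(A_{UU})$, where $U = {\bf n}\setminus T$.

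To produce $T$, write $S_j = \{i : r_{ij} > 0\}$; the identity $r_{ik} = \sum_\ell r_{i\ell} r_{\ell k}$ implies $S_\ell \subseteq S_k$ whenever $\ell \in S_k$. Choosing $j_0$ with $|S_{j_0}|$ minimal and any $j_1 \in S_{j_0}$, minimality forces $T := S_{j_1} = S_{j_0}$, and then $S_k = T$ for every $k \in T$. The existence of a zero entry in $R$ gives $|T| < n$, while column-stochasticity from Lemma \ref{norm} gives $|T| \geq 1$. Reordering so that $T = \{1,\ldots,m\}$ and $U = \{m+1,\ldots,n\}$, the property $S_j = T$ for $j \in T$ forces $a_{ij} = 0$ whenever $i \in U$ and $j \in T$, and $A$ takes the block form
\begin{eqnarray*}
A = \begin{pmatrix} B & D \\ 0 & C \end{pmatrix},
\end{eqnarray*}
with $B = A_{TT}$, $C = A_{UU}$, and $D = A_{TU}$. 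Expanding $A^2 = A$ breaks into $B^2 = B$, $C^2 = C$, and the compatibility $BD + DC = D$; in particular $P' := \mathrm{Im}(B)$ and $P'' := \mathrm{Im}(C)$ are finitely generated projective $A_N$-modules.

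The main obstacle --- and the step that genuinely exploits the $A_N$-geometry rather than only the abstract identity $R^2 = R$ --- is to show that $Dz = 0$ for every $z \in P''$. For this I would combine the triangle inequality applied to $z = Cz$ with the column-sum identity $\sum_{i \in T}|D_{ij}| + \sum_{i \in U}|C_{ij}| = 1$ from Lemma \ref{norm}: setting $\gamma_j = \sum_{i \in U}|C_{ij}| \leq 1$, the chain $\lVert z\rVert = \lVert Cz\rVert \leq \sum_j |z_j|\gamma_j \leq \lVert z\rVert$ collapses to equalities, so each index $j$ with $z_j \neq 0$ satisfies $\gamma_j = 1$, equivalently the $j$-th column of $D$ vanishes; hence $Dz = 0$. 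With this annihilation, realising $P' \oplus P''$ as $\mathrm{Im}(B \oplus C) \subset A_N({\bf n})$, one verifies $\mathrm{Im}(A) = \mathrm{Im}(B \oplus C)$ in a few lines: for $(y,z) = A(x,w)$ the relation $BD = D - DC$ combined with $D(Cw) = 0$ gives $By = y$, placing $(y,z)$ in $\mathrm{Im}(B \oplus C)$, and conversely $D \cdot \mathrm{Im}(C) = 0$ makes $A$ fix every $(By', Cz') \in \mathrm{Im}(B \oplus C)$. This yields the required splitting $P = P' \oplus P''$.
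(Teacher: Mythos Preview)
Your argument is correct and reaches the same conclusion as the paper's, but by a cleaner route after the block-triangularisation. Both proofs set up the subset $S$ (your $T$) via the minimal $|S_j|$ and obtain the block form $A=\bigl(\begin{smallmatrix}B&D\\0&C\end{smallmatrix}\bigr)$. From there the paper appeals to an external result (Durov {\bf 10.2.12}) to realise $P$ as a retract of $P'\oplus A_N({\bf n}-{\bf k})$ and then runs a norm argument on the resulting idempotent $q$ acting on that auxiliary module to force $q=\mathrm{id}_{P'}\oplus q'$. You instead stay inside $A_N({\bf n})$ and prove directly that $D$ annihilates $\mathrm{Im}(C)$: the chain $\lVert z\rVert=\lVert Cz\rVert\le\sum_j\gamma_j|z_j|\le\lVert z\rVert$ forces $\gamma_j=1$ (hence $D_{\cdot j}=0$) whenever $z_j\neq0$, after which $\mathrm{Im}(A)=\mathrm{Im}(B\oplus C)\cong P'\oplus P''$ follows in two lines. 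Your version is self-contained and isolates the geometric content---the off-diagonal block kills the image of $C$---whereas the paper's detour through $P'\oplus A_N({\bf n}-{\bf k})$ is closer to a general d\'evissage template but less transparent. One small point worth making explicit in your write-up: the blocks $B$ and $C$ genuinely define $A_N$-endomorphisms of $A_N({\bf m})$ and $A_N({\bf n}-{\bf m})$ (i.e., are norm-nonincreasing), which holds because the column sums of $|B|$ equal $1$ and those of $|C|$ are at most $1$, both consequences of Lemma~\ref{norm}.
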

\begin{proof}
Fix $j\in {\bf n}$ and consider
\begin{eqnarray*}
S_j=\{i; r_{ij}>0\}.
\end{eqnarray*}
Choose $i_0$ such that $|S_{i_0}|$ is minimal and put $S=S_{i_0}$. Note that $i_0\in S$, since otherwise $u_{i_0}$ would lie in the $A_N({\bf 1})$-span of $\{e_j\}_{j\neq i_0}$, contradicting minimality of $n$. Since $r_{ij}=\sum_k r_{ik}r_{kj}$, $k\in S_j$ implies $S_k\subset S_j$. By minimality of $|S|$, $i,j\in S$ implies $S_i=S_j=S$. This means that
\begin{eqnarray*}
i,j\in S \Rightarrow r_{ij}>0  \mbox{ and}\\
i\notin S, j\in S \Rightarrow r_{ij}=0.
\end{eqnarray*}
In other words, we may reorder indices so that $S={\bf k}$, for some $k<n$. Then the matrix $A$ is block-triangular
\begin{eqnarray*}
A= \left( \begin{array}{ccc}
A' & C \\
0 & A'' \end{array} \right)
\end{eqnarray*}
with $A'^2=A'$ and $A''^2=A''$. Let $P'$ be the finitely generated projective $A_N$-module defined by $A'$. By \cite{durov} {\bf 10.2.12}, there exist maps
\begin{eqnarray*}
\sigma':P'\oplus A_N({\bf n}-{\bf k})\to P  \mbox{ and}\\
j:P\to P'\oplus A_N({\bf n}-{\bf k}),
\end{eqnarray*}
such that $\sigma' \circ j=\iden_P$. Put $q=j\circ \sigma'$, and let $q'$ be the composite
\begin{eqnarray*}
A_N({\bf n}-{\bf k})\to P'\oplus A_N({\bf n}-{\bf k}) \stackrel{q}{\to}P'\oplus A_N({\bf n}-{\bf k})\to A_N({\bf n}-{\bf k}).
\end{eqnarray*}
Then $q^2=q$ and $q'^2=q'$. Now we show that $q=\iden_{P'}\oplus q'$. Let $\{e_i\}$ be the standard basis of $A_N({\bf n}-{\bf k})$, and put $m_i=q(e_i)$, for $1\leq i\leq n-k$. Then
\begin{eqnarray*}
m_i=\lambda_i v_i+\mu_i w_i,
\end{eqnarray*}
for some $v_i\in P'$, $w_i\in A_N({\bf n}-{\bf k})$, and with $|\lambda_i|+|\mu_i|\leq 1$. Let $m_i'$ be the projection of $m_i$ on $A_N({\bf n}-{\bf k})$, that is $m_i'=q'(e_i)=\mu_i w_i$. 
Since $q'^2=q'$, we have $\lVert q'(m_i')\rVert=\lVert m_i'\rVert$ and hence
\begin{eqnarray*}
\lVert q'(w_i)\rVert=\lVert w_i\rVert.
\end{eqnarray*}
By the same argument as in lemma \ref{norm}, this means that $(w_i)_j=0$ unless $\lVert m_i'\rVert=1$. Setting $I=\{i; \lVert m_i'\rVert=1\}$, we can write
\begin{eqnarray*}
w_i &=& \sum_{j\in I}(w_i)_j e_j,\\
q(w_i) &=& \sum_{j\in I} (w_i)_j m_j,
\end{eqnarray*}
and hence
\begin{eqnarray*}
m_i=q(m_i)=\lambda_i v_i + \mu_i q(w_i).
\end{eqnarray*}
This means that $m_i$ lies in the $A_N$-submodule of $P'\oplus A_N({\bf n}-{\bf k})$ generated by $P'$ and $\{m_j\}_{j\in I}$. Minimality of $n-k$ now implies that $|\mu_i| \lVert w_i\rVert=\lVert m_i'\rVert=1$, for $1\leq i\leq n-k$. Since $w_i\in A_N({\bf n}-{\bf k})$, we have $\lVert w_i\rVert\leq 1$. Since also $|\lambda_i|+|\mu_i|\leq 1$, we conclude that $\lambda_i=0$, i.e. $m_i\in A_N({\bf n}-{\bf k})$. This means that $q=\iden_{P'}\oplus q'$, so that $P=P'\oplus P''$ for a finitely generated projective $A_N$-module $P''$.
\end{proof}
\noindent By induction on the rank of projective modules, we conclude that $P$ is free, being a direct sum of free modules. In summary, we have proved the following.
\begin{theor} \label{mainth}
If $P$ is a finitely generated projective $A_N$-module, then $P$ is free.
\end{theor}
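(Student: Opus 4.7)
The plan is to reduce to the study of an idempotent matrix $A=(a_{ij})$ with entries in $A_N(\mathbf{1})$ and exploit rigidity of the $L^1$-norm. Given a finitely generated projective $A_N$-module $P$, pick a surjection $\pi:A_N(\mathbf{n})\to P$ with $n$ \emph{minimal}, a section $\sigma:P\to A_N(\mathbf{n})$, and set $a=\sigma\circ\pi$, so $a^{2}=a$ is represented by the matrix $A$. Writing $u_j=a(e_j)$, the inequality $\lVert u_j\rVert\le 1$ is automatic, and by Lemma \ref{norm} the minimality of $n$ forces $\lVert u_j\rVert=1$ for every $j$, i.e.\ each column of the modulus matrix $R=(|a_{ij}|)$ sums to $1$. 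A direct computation using $A^{2}=A$ and the triangle inequality, as indicated just after Lemma \ref{norm}, shows that $R^{2}=R$ as well.

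From this point the proof splits into two cases according to whether $R$ is strictly positive. In the \emph{positive case}, Proposition \ref{columns} says that all columns of $R$ are equal; a sign-normalization trick, conjugating $A$ by the diagonal matrix with entries $\epsilon_i=a_{1i}/|a_{1i}|$, produces a new matrix $B$ defining the same projective module and having strictly positive entries in its first row. Positivity of the entries combined with the equality $|a_{ij}a_{jk}|=\sum_j|a_{ij}a_{jk}|$ (which forces all summands to share a common sign) then propagates to show every entry of $B$ is positive, so $B=R$. Since $R$ has all columns equal and $n$ is minimal, one concludes $n=1$ and hence $P\simeq A_N(\mathbf{1})$ is free of rank $1$.

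In the \emph{non-positive case}, where some $r_{ij}=0$, the key step is the combinatorial analysis of the supports $S_j=\{i:r_{ij}>0\}$. Using $r_{ij}=\sum_k r_{ik}r_{kj}$ one shows $k\in S_j\Rightarrow S_k\subset S_j$; choosing $j$ with $|S_j|$ minimal and exploiting minimality of $n$ to ensure $j\in S_j$, one deduces that $S_j$ is closed under $i\mapsto S_i$ and yields, after a permutation, a block-triangular shape for $A$ with idempotent diagonal blocks $A'$, $A''$. This produces a candidate decomposition $P=P'\oplus P''$ by invoking \cite{durov} \textbf{10.2.12}. The delicate point, which I expect to be the main technical obstacle, is showing that the naturally produced idempotent $q$ on $P'\oplus A_N(\mathbf{n}-\mathbf{k})$ actually has the form $\iden_{P'}\oplus q'$ rather than mixing the summands: this requires applying the Lemma \ref{norm} type analysis a second time to the projections $m_i'=q'(e_i)$, showing that the norm equality $\lVert q'(w_i)\rVert=\lVert w_i\rVert$ forces the support of $w_i$ to lie in the index set $I=\{i:\lVert m_i'\rVert=1\}$, and then using $|\lambda_i|+|\mu_i|\le 1$ together with $|\mu_i|\lVert w_i\rVert=1$ to conclude $\lambda_i=0$.

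Once the splitting $P=P'\oplus P''$ is established in the non-positive case, induction on $n=\mathrm{rank}\,A_N(\mathbf{n})$ finishes the proof: each summand has strictly smaller minimal generating set, so by the induction hypothesis both are free, and hence so is $P$. The base case is either $n=1$ or the positive case handled above. The overall strategy is the standard idempotent-matrix approach to finitely generated projective modules, adapted to the metric constraint defining $A_N$; the novelty lies entirely in exploiting the equality cases of the $L^1$ triangle inequality.
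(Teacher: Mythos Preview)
Your proposal is correct and follows the paper's approach essentially line by line: the same idempotent-matrix setup with minimal $n$, Lemma~\ref{norm} and $R^{2}=R$, Proposition~\ref{columns} plus the sign normalization in the strictly positive case, the $S_j$-support analysis giving block-triangularity together with the $q=\iden_{P'}\oplus q'$ verification in the non-positive case, and induction on $n$. Apart from a small typo (the equality you invoke should read $\bigl|\sum_j a_{ij}a_{jk}\bigr|=\sum_j|a_{ij}a_{jk}|$), there is nothing to add.
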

\subsection{Vector bundles over $\widehat{\spec {\mathbb Z}^N}$}
According to \cite{durov} {\bf 7.1.19}, example \ref{examples} $(i)$, and theorem \ref{mainth}, the category of vector bundles over $\widehat{\spec {\mathbb Z}^N}$ is equivalent with the category of triples $(E_{\mathbb Z}, E_N, \theta_N)$, where $E_{\mathbb Z}$ is a free ${\mathbb Z}$-module, $E_N$ a free $A_N$-module and $\theta_N$ a ${\mathbb Z}[N^{-1}]$-module isomorphism $E_{\mathbb Z}[N^{-1}]\to E_N[N^{-1}]$. It follows from these considerations that any vector bundle over $\widehat{\spec {\mathbb Z}^N}$ has a well-defined rank.\\
  \\
Given a positive integer $r$, the vector bundles of rank $r$ admit an alternative description as a certain double coset of matrices. Namely, let $E$ be a free ${\mathbb Z}[N^{-1}]$-module and choose a basis $(f_i)_{1\leq i\leq r}$ of $E_{\mathbb Z}\subset E$ and a basis $(e_i)_{1\leq i\leq r}$ of  $E_N\subset E$. Both being bases of $E$, they are related to each other by means of a matrix $A=(a_{ij})\in \gl_r({\mathbb Z}[N^{-1}])$:
\begin{eqnarray*}
e_i=\sum_{j=1}^r a_{ij} f_j, \mbox{ \ \ } \ a_{ij}\in {\mathbb Z}[N^{-1}].
\end{eqnarray*}
If we choose another basis for $E_{\mathbb Z}$, it is related to $(f_i)_{1\leq i\leq r}$ by means of a matrix $B=(b_{ij})\in \gl_r({\mathbb Z})$:
\begin{eqnarray*}
f_i=\sum_{j=1}^r b_{ij}f'_j.
\end{eqnarray*}
 Similarly, if we replace $(e_i)_{1\leq i\leq r}$ by another basis of $E_N$, the two bases are related to each other by means of a matrix $C=(c_{ij})$ in the group $\oct_r$ of symmetries of the $r$-octahedron:
\begin{eqnarray*}
e_i'=\sum_{j=1}^r c_{ij} e_j.
\end{eqnarray*} 
Hence, multiplying $A$ from the right by matrices from $\gl_r({\mathbb Z})$ and from the left by matrices from $\oct_r$ does not change the corresponding vector bundle. Conversely, if $A$ and $A'$ define isomorphic vector bundles, we may assume that they arise from different choices of bases in the same modules $E_{\mathbb Z}$ and $E_N$. Hence $A'=CAB$ for some $C\in \oct_r$ and some $B\in \gl_r({\mathbb Z})$. We have just shown:
\begin{prop} \label{picprop}
Isomorphism classes of vector bundles of rank $r$ over $\widehat{\spec {\mathbb Z}^N}$ are in one-to-one correspondence with double cosets
\begin{eqnarray*}
\oct_r \backslash \gl_r({\mathbb Z}[N^{-1}])/\gl_r({\mathbb Z}).
\end{eqnarray*}
In particular, 
\begin{eqnarray*}
\pic(\widehat{\spec {\mathbb Z}^N})=\{\pm 1\} \backslash {\mathbb Z}[N^{-1}]^{\ast}/\{\pm 1\}=\log {\mathbb Z}[N^{-1}]^{\ast}_{+}.
\end{eqnarray*}
We denote the line bundle corresponding to an element $\log \lambda\in \log {\mathbb Z}[N^{-1}]^{\ast}_{+}$ by ${\mathcal O}(\log \lambda)$.
\end{prop}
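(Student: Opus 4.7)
Since the two paragraphs preceding the statement already carry out most of the construction, my plan is simply to organize that discussion into a clean three-step proof.

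First, I would invoke Durov \textbf{7.1.19} together with example \ref{examples} $(i)$ and theorem \ref{mainth} to assert the equivalence between the category of vector bundles on $\widehat{\spec {\mathbb Z}^N}$ and the category of triples $(E_{\mathbb Z}, E_N, \theta_N)$, in which both $E_{\mathbb Z}$ and $E_N$ are free of the given rank $r$. Choosing a ${\mathbb Z}$-basis $(f_i)$ of $E_{\mathbb Z}$ and an $A_N$-basis $(e_i)$ of $E_N$ identifies $\theta_N$ with a matrix $A\in \gl_r({\mathbb Z}[N^{-1}])$, and conversely any such matrix gives rise to a rank-$r$ triple, so sending the triple to $A$ defines a surjection from $\gl_r({\mathbb Z}[N^{-1}])$ onto the set of isomorphism classes of rank-$r$ vector bundles.

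Next, I would spell out how basis changes act on $A$. A change of ${\mathbb Z}$-basis on $E_{\mathbb Z}$ multiplies $A$ on the right by an element of $\gl_r({\mathbb Z})$ in the standard fashion. The crucial input is the identification $\mathrm{Aut}_{A_N}(A_N({\bf r}))=\oct_r$; I would deduce this by applying the norm computation of lemma \ref{norm} simultaneously to an automorphism $\phi$ and to $\phi^{-1}$, so that each column of the associated matrix and of its inverse has $L^1$-norm exactly $1$. Combined with the sign-rigidity observation used immediately before theorem \ref{mainth} (that $|x_1+\cdots+x_n|=|x_1|+\cdots+|x_n|$ forces the $x_i$ to share a sign), this pins $\phi$ down to a signed permutation of $(e_i)$ and produces the left action by $\oct_r$. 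The two actions together yield a well-defined bijection from $\oct_r\backslash \gl_r({\mathbb Z}[N^{-1}])/\gl_r({\mathbb Z})$ onto the set of isomorphism classes of rank-$r$ vector bundles.

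Finally, for the Picard group I would specialize to $r=1$, where $\gl_1({\mathbb Z}[N^{-1}])={\mathbb Z}[N^{-1}]^{\ast}$, $\gl_1({\mathbb Z})=\{\pm 1\}$ and $\oct_1=\{\pm 1\}$. The resulting double coset is ${\mathbb Z}[N^{-1}]^{\ast}/\{\pm 1\}$, and the map $\lambda\mapsto \log|\lambda|$ identifies it with $\log {\mathbb Z}[N^{-1}]^{\ast}_{+}$.

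The main obstacle I anticipate is the rigidity assertion $\mathrm{Aut}_{A_N}(A_N({\bf r}))=\oct_r$; once the norm-preservation argument of lemma \ref{norm} is applied to both an automorphism and its inverse, everything else reduces to familiar bookkeeping along the lines of the classical Picard-group-via-patching computation.
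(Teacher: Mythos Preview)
Your proposal is correct and follows the same route as the paper: the two paragraphs preceding the proposition already constitute its proof, and you have reorganized that discussion faithfully. The only difference is that the paper simply asserts that a change of basis of $E_N$ is given by a matrix in $\oct_r$, whereas you supply a justification of $\mathrm{Aut}_{A_N}(A_N({\bf r}))=\oct_r$ via the norm argument; this extra detail is sound (norm preservation forces the columns of an automorphism to have pairwise disjoint supports, hence to be signed standard vectors) and fills in a point the paper leaves implicit.
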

\begin{rema} The notation $\log {\mathbb Z}[N^{-1}]^{\ast}_{+}$ arises from the convention that $\pic(\widehat{\spec {\mathbb Z}^N})$ should be written additively, while ${\mathbb Z}[N^{-1}]^{\ast}_{+}$ is written multiplicatively. 
\end{rema}
\newpage
\noindent Now let $u:{\mathcal E}'\to {\mathcal E}$ be a cofibration of vector bundles with cofiber ${\mathcal E}''$. Choose a ${\mathbb Z}[N^{-1}]$-module $E$. Set $E_{\mathbb Z}'=\Gamma(\spec {\mathbb Z}, {\mathcal E}')$, $E_{\mathbb Z}=\Gamma(\spec {\mathbb Z}, {\mathcal E})$, $E_N'=\Gamma(\spec A_N, {\mathcal E}')$ and $E_N=\Gamma(\spec A_N, {\mathcal E})$. Let $(f_i')_{1\leq\i\leq r}$ and $(f_i'')_{1\leq i\leq s}$ be bases of $E_{\mathbb Z}'$ and $E_{\mathbb Z}/E_{\mathbb Z}'$ in $E$, respectively. If we let $f_{r+i}$ be any lifts of $f_i''$, $1\leq i\leq s$, then
\begin{eqnarray*}
f_i=f_i' \mbox{\ \ if $1\leq i\leq r$   and $f_i$ if $r<i\leq r+s$,}
\end{eqnarray*}
is a basis of $E_{\mathbb Z}$. By theorem \ref{mainth}, $E_N'\to E_N$ is isomorphic to $A_N({\bf r})\to A_N({\bf r}+{\bf s})$, so we may choose a basis $(e_i)_{1\leq i\leq r+s}$ of $E_N$, such that its first $r$ elements constitute a basis of $E_N'$ and such that the images of the remaining $s$ elements constitute a basis of the cofiber $E_N''$ of $E_N'\to E_N$. Let $A\in \gl_{r+s}({\mathbb Z}[N^{-1}])$ be the matrix relating $(e_i)_{1\leq i\leq r+s}$ to $(f_i)_{1\leq i\leq r+s}$ and define $A'\in \gl_{r}({\mathbb Z}[N^{-1}])$ and $A''\in \gl_{s}({\mathbb Z}[N^{-1}])$ similarly. Then $A$, $A'$ and $A''$ describe ${\mathcal E}$, ${\mathcal E}'$ and ${\mathcal E}''$, respectively and by construction
\begin{eqnarray*}
A= \left( \begin{array}{ccc}
A' & 0 \\
C & A'' \end{array} \right)
\end{eqnarray*}
Conversely, if a vector bundle ${\mathcal E}$ admits a description in terms of a block triangular matrix $A$ of the above form, then we obtain a cofibration ${\mathcal E}'\to {\mathcal E}$ of vector bundles over $\widehat{\spec {\mathbb Z}^N}$.
We have shown:
\begin{prop}
Cofibrations of vector bundles over $\widehat{\spec {\mathbb Z}^N}$ correspond to block-triangular decompositions of corresponding matrices.
\end{prop}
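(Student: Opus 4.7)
The plan is to exploit the description of vector bundles over $\widehat{\spec {\mathbb Z}^N}$ as triples $(E_{\mathbb Z}, E_N, \theta_N)$ and translate the notion of cofibration into compatible splittings of the two free modules $E_{\mathbb Z}$ and $E_N$. Concretely, a cofibration $u:{\mathcal E}'\to{\mathcal E}$ of vector bundles restricts to a cofibration of ${\mathbb Z}$-modules on $\spec {\mathbb Z}$ and to a cofibration of $A_N$-modules on $\spec A_N$, and after choosing bases respecting these splittings the transition matrix between the two sides automatically becomes block-triangular.

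For the forward direction I would proceed as follows. First, on the ${\mathbb Z}$-side, the cofiber $E_{\mathbb Z}''=E_{\mathbb Z}/E_{\mathbb Z}'$ is a finitely generated projective, hence free, ${\mathbb Z}$-module (Example \ref{examples}$(i)$), so I can lift any basis $(f_i'')$ of $E_{\mathbb Z}''$ to elements of $E_{\mathbb Z}$ and extend a chosen basis $(f_i')$ of $E_{\mathbb Z}'$ to a basis $(f_i)$ of $E_{\mathbb Z}$. Second, on the $A_N$-side, Theorem \ref{mainth} together with the local-presentation definition of cofibration implies $E_N\simeq E_N'\oplus E_N''$ with $E_N', E_N''$ free, so I can pick a basis $(e_i)$ of $E_N$ whose first $r$ elements generate $E_N'$ and whose last $s$ elements map to a basis of $E_N''$. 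The change-of-basis matrix $A\in \gl_{r+s}({\mathbb Z}[N^{-1}])$ between $(e_i)$ and $(f_i)$ then has the shape
\begin{eqnarray*}
A=\left(\begin{array}{cc} A' & 0 \\ C & A'' \end{array}\right),
\end{eqnarray*}
because the first $r$ basis vectors of $E_N$ already lie in the ${\mathbb Z}[N^{-1}]$-span of the first $r$ basis vectors of $E_{\mathbb Z}$. The diagonal blocks $A'$ and $A''$ represent ${\mathcal E}'$ and ${\mathcal E}''$ under the correspondence of Proposition \ref{picprop}.

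For the converse direction, given a block-triangular matrix $A$ of the above form I would simply read off the obvious sub-triple $(f_1,\ldots,f_r;\, e_1,\ldots,e_r;\, A')$ giving a vector bundle ${\mathcal E}'\hookrightarrow {\mathcal E}$, and verify directly from Definition of cofibration that the inclusion is a retract of a standard embedding locally on each of the two affines $\spec {\mathbb Z}$ and $\spec A_N$ (here the splittings are literally the obvious projections onto the first $r$ coordinates in each basis).

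The main obstacle I anticipate is not the matrix bookkeeping but justifying the splitting of $E_N'\hookrightarrow E_N$ as $A_N$-modules into a free direct summand plus free quotient. This is exactly where Theorem \ref{mainth} is essential: without it, a finitely generated projective $A_N$-submodule of a free module would not automatically be free, and one could not realize the quotient as a free complement with a lifted basis. Once this splitting is in hand, the block-triangular form is immediate from comparing the two bases, and the converse direction is essentially tautological.
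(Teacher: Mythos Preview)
Your proposal is correct and follows essentially the same route as the paper: split the ${\mathbb Z}$-side using freeness over a PID, split the $A_N$-side using Theorem~\ref{mainth} so that $E_N'\to E_N$ is isomorphic to the standard embedding $A_N({\bf r})\to A_N({\bf r}+{\bf s})$, and read off the block-triangular form of the transition matrix; the converse is handled tautologically in both. You also correctly identify the key point, namely that Theorem~\ref{mainth} is what guarantees the $A_N$-side cofibration can be realized as a free direct-summand inclusion.
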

\noindent Given any right coset $\gl_r({\mathbb Z}[N^{-1}])/\gl_r({\mathbb Z})$, one can construct a matrix $A=(a_{ij})\in \gl_r({\mathbb Z}[N^{-1}])$ in the same coset satisfying the following conditions:
\begin{eqnarray*}
\bullet&&\mbox{$A$ is lower-triangular, i.e. $a_{ij}=0$ for $i<j$.}\\
\bullet&&\mbox{$A$ has positive diagonal elements, i.e. $a_{ii}>0$.}\\
\end{eqnarray*}
This is done in \cite{durov} {\bf 10.5.15}.
\begin{lem} \label{surjlem}
$K^0(\widehat{\spec {\mathbb Z}^N})$ is generated by line bundles.
\end{lem}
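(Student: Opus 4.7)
The plan is to use the normal form for cosets in $\gl_r({\mathbb Z}[N^{-1}])/\gl_r({\mathbb Z})$ cited just above (\cite{durov} {\bf 10.5.15}): every vector bundle ${\mathcal E}$ of rank $r$ over $\widehat{\spec {\mathbb Z}^N}$ can be represented by a lower-triangular matrix $A=(a_{ij})\in \gl_r({\mathbb Z}[N^{-1}])$ with strictly positive diagonal entries $a_{ii}>0$. Writing $A$ in block form with upper-left block $A'=(a_{11})$ of size $1\times 1$ and lower-right block $A''$ of size $(r-1)\times(r-1)$, the off-diagonal block in the upper-right is zero, so $A$ has exactly the shape
\begin{eqnarray*}
A=\left(\begin{array}{cc} A' & 0 \\ C & A'' \end{array}\right).
\end{eqnarray*}
By the preceding proposition this block-triangular decomposition corresponds to a cofibration ${\mathcal E}'\to {\mathcal E}$ of vector bundles with cofiber ${\mathcal E}''$, where ${\mathcal E}'$ has rank $1$ and is described by $A'=(a_{11})$, and ${\mathcal E}''$ has rank $r-1$ and is described by the lower-triangular matrix $A''$ with positive diagonal.

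Consequently, in $K^0(\widehat{\spec {\mathbb Z}^N})$ we obtain the relation $[{\mathcal E}]=[{\mathcal E}']+[{\mathcal E}'']$, with ${\mathcal E}'$ the line bundle ${\mathcal O}(\log a_{11})$ under the identification of Proposition \ref{picprop}. Now I would proceed by induction on the rank $r$. The base case $r=1$ is immediate, since then ${\mathcal E}$ is itself a line bundle. In the inductive step, $A''$ is again lower-triangular with positive diagonal, so by the induction hypothesis $[{\mathcal E}'']$ is a sum of classes of line bundles, and therefore so is $[{\mathcal E}]=[{\mathcal O}(\log a_{11})]+[{\mathcal E}'']$. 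Since the empty bundle gives the class $0$, this shows that every generator of $K^0(\widehat{\spec {\mathbb Z}^N})$ is expressible as a $\mathbb Z$-linear combination of classes of line bundles, which is the claim.

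The only nontrivial input is the existence of the lower-triangular normal form and the identification of block-triangular matrices with cofibrations, both of which are already in place; so there is no serious obstacle. The main thing to check carefully is that the inductive hypothesis applies to $A''$, i.e.\ that the bottom-right $(r-1)\times(r-1)$ block of a lower-triangular matrix with positive diagonal in $\gl_r({\mathbb Z}[N^{-1}])$ indeed lies in $\gl_{r-1}({\mathbb Z}[N^{-1}])$ with positive diagonal, which is clear because $\det A=\prod_i a_{ii}$ and each $a_{ii}$ is a unit in ${\mathbb Z}[N^{-1}]$.
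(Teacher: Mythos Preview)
Your proof is correct and follows essentially the same approach as the paper: both use the lower-triangular normal form from \cite{durov} {\bf 10.5.15} together with the correspondence between block-triangular decompositions and cofibrations to split off line bundles. The only cosmetic difference is that you spell out the induction on the rank explicitly, whereas the paper simply observes that a completely triangular matrix yields $[{\mathcal E}]=\sum_{i=1}^{r}[{\mathcal O}(\log a_{ii})]$ in one line.
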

\begin{proof}
Let ${\mathcal E}$ be a vector bundle of rank $r$ and let $A$ be a matrix of the above type describing ${\mathcal E}$. Since $A$ is in particular block triangular, it corresponds to a cofibration of vector bundles. Since $A$ is in fact completely triangular, we get the equality
\begin{eqnarray*}
[{\mathcal E}]=\sum_{i=1}^{r}[{\mathcal O}(\log a_{ii})],
\end{eqnarray*}
in $K^0(\widehat{\spec {\mathbb Z}^N})$.
\end{proof}
\begin{lem} \label{injlem}
The map
\begin{eqnarray*}
c_1:\log {\mathbb Z}[N^{-1}]^{\ast}_{+}&\to& K^0(\widehat{\spec {\mathbb Z}^N}),\\
\log \lambda &\mapsto& [{\mathcal O}(\log \lambda)]-1,
\end{eqnarray*}
is an injective homomorphism of abelian groups.
\end{lem}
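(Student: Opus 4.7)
The lemma has two parts: that $c_1$ is a group homomorphism and that it is injective.

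For the homomorphism property I will establish the identity $[\mathcal{O}(\log\lambda)] + [\mathcal{O}(\log\mu)] = 1 + [\mathcal{O}(\log\lambda\mu)]$ in $K^0(\widehat{\spec \mathbb{Z}^N})$, first in the special case when $\mu$ is a positive integer (necessarily a divisor of some power of $N$, so a unit in $\mathbb{Z}[N^{-1}]$). Consider the matrix
$$A = \begin{pmatrix} \lambda & 0 \\ 1 & \mu \end{pmatrix} \in \gl_2(\mathbb{Z}[N^{-1}]).$$
Since $A$ is lower triangular with positive diagonal $(\lambda,\mu)$, the proposition preceding Lemma \ref{surjlem} yields $[\mathcal{E}_A]=[\mathcal{O}(\log\lambda)]+[\mathcal{O}(\log\mu)]$ for the corresponding rank-$2$ vector bundle $\mathcal{E}_A$. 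The key observation is that with $C = \begin{pmatrix} 0 & 1 \\ 1 & 0 \end{pmatrix} \in \oct_2$ and $B = \begin{pmatrix} 1 & \mu \\ 0 & -1 \end{pmatrix}$ (which belongs to $\gl_2(\mathbb{Z})$ precisely because $\mu \in \mathbb{Z}$, and has determinant $-1$), a direct computation gives
$$CAB = \begin{pmatrix} 1 & 0 \\ \lambda & \lambda\mu \end{pmatrix}.$$
This matrix is again lower triangular with positive diagonal and represents the same vector bundle $\mathcal{E}_A$. Reading off the new diagonal gives the alternative decomposition $[\mathcal{E}_A] = 1 + [\mathcal{O}(\log\lambda\mu)]$, and equating the two expressions proves the identity in this special case.

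To remove the integer hypothesis, specializing to $\lambda = \nu^{-1}, \mu = \nu$ for $\nu \in \mathbb{Z}_{>0}$ dividing a power of $N$ first yields $c_1(-\log\nu) = -c_1(\log\nu)$. Since every element of $\mathbb{Z}[N^{-1}]^*_+$ factors uniquely as $\prod_{p|N} p^{n_p}$ with $n_p \in \mathbb{Z}$, a straightforward induction on $\sum_p |n_p|$, peeling off a single prime factor via the integer case at each step, shows $c_1(\log\nu) = \sum_p n_p c_1(\log p)$, which is manifestly a group homomorphism in $\log\nu$. For injectivity, I construct a left inverse of $c_1$: define $\det:K^0(\widehat{\spec \mathbb{Z}^N}) \to \log\mathbb{Z}[N^{-1}]^*_+$ by $[\mathcal{E}] \mapsto \log|\det A|$ for any matrix $A \in \gl_r(\mathbb{Z}[N^{-1}])$ representing the rank-$r$ vector bundle $\mathcal{E}$. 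This is well-defined on double cosets since $\det(\oct_r)$ and $\det(\gl_r(\mathbb{Z}))$ are both contained in $\{\pm 1\}$, and it respects cofibrations via multiplicativity of the determinant on block-triangular matrices. A direct computation yields $\det \circ c_1 = \iden_{\log\mathbb{Z}[N^{-1}]^*_+}$, forcing $c_1$ to be injective.

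The main obstacle is the explicit matrix identity in the first step; the particular choice of the off-diagonal $1$ in $A$ and the transforming matrix $B$ is engineered so that, exactly when $\mu \in \mathbb{Z}$, one obtains a representative of the same bundle whose diagonal is $(1,\lambda\mu)$ rather than $(\lambda,\mu)$, which is precisely what trades the sum $[\mathcal{O}(\log\lambda)] + [\mathcal{O}(\log\mu)]$ for $1 + [\mathcal{O}(\log\lambda\mu)]$. Once this integer case is secured, the inductive extension and the determinant argument are routine.
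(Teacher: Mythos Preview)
Your proof is correct and follows essentially the same strategy as the paper: both start from the matrix
\[
A=\begin{pmatrix}\lambda&0\\1&\mu\end{pmatrix},
\]
apply the transposition $C=\begin{pmatrix}0&1\\1&0\end{pmatrix}\in\oct_2$ on the left, and then produce a second lower-triangular representative with diagonal $(1,\lambda\mu)$; the injectivity argument via the determinant map is identical.

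The one genuine difference is in how that second triangular representative is obtained. The paper appeals to the existence of a canonical lower-triangular form in the right $\gl_2(\mathbb{Z})$-coset and reads off $a''_{11}=\gcd(1,\mu)=1$, $a''_{22}=\lambda\mu$ from row-gcd and determinant considerations. You instead write down an explicit $B\in\gl_2(\mathbb{Z})$, which forces $\mu\in\mathbb{Z}$, and then extend to arbitrary $\mu\in\mathbb{Z}[N^{-1}]^{\ast}_{+}$ by an induction on the prime factorization using the free abelian structure of $\log\mathbb{Z}[N^{-1}]^{\ast}_{+}$. Your route is slightly longer but more elementary (no appeal to the canonical-form lemma) and in fact more careful: the paper's identification $\gcd(1,\mu)=1$ is only literally correct when $\mu$ is an integer, since the invariant preserved under right $\gl_2(\mathbb{Z})$-multiplication is the $\mathbb{Z}$-module generated by the row entries, which for $(1,\mu)$ equals $\tfrac{1}{b}\mathbb{Z}$ when $\mu=a/b$ in lowest terms. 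So your explicit integer case plus induction is exactly the right way to make the paper's argument rigorous.
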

\begin{proof}
First we check that
\begin{eqnarray*}
[{\mathcal O}(\log \lambda + \log \mu)]-1=[{\mathcal O}(\log \lambda)]-1 + [{\mathcal O}(\log \mu)]-1,
\end{eqnarray*}
in $K^0(\widehat{\spec {\mathbb Z}^N})$. Consider the vector bundle ${\mathcal E}$ over $K^0(\widehat{\spec {\mathbb Z}^N})$ defined by the matrix
\begin{eqnarray*}
A= \left( \begin{array}{ccc}
\lambda & 0 \\
1 & \mu \end{array} \right)
\end{eqnarray*}
Then $[{\mathcal E}]=[{\mathcal O}(\log \lambda)]+ [{\mathcal O}(\log \mu)]$ in $K^0(\widehat{\spec {\mathbb Z}^N})$. Multiplying $A$ by $\left( \begin{array}{ccc}
0 & 1\\
1 & 0 \end{array} \right)\in \oct_2$ from the left, we obtain
\begin{eqnarray*}
A'= \left( \begin{array}{ccc}
1 & \mu \\
\lambda & 0 \end{array} \right)
\end{eqnarray*}
Consider the canonical form $A''=(a''_{ij})$ of $A'$ in the sense the above discussion. Since $A'$ is congruent to $A''$ modulo $\gl_2({\mathbb Z})$, the row g.c.d.'s of $A'$ and $A''$ must be equal, so $a''_{11}=gcd(1,\mu)=1$. Also, since matrices in $\gl_2({\mathbb Z})$ have determinant $\pm 1$, $a''_{22}=\det A''=\pm \det A=\pm \lambda\mu$. All numbers involved are positive, so we get $a''_{22}=\lambda\mu$. Since $A''$ also defines ${\mathcal E}$, we have $[{\mathcal E}]=[{\mathcal O}(\log \lambda+ \log \mu)]+ 1$ in $K^0(\widehat{\spec {\mathbb Z}^N})$.
To prove injectivity, we consider the map 
\begin{eqnarray*}
\degg:K^0(\widehat{\spec {\mathbb Z}^N})&\to &\log {\mathbb Z}[N^{-1}]^{\ast}_{+},
\end{eqnarray*}
which sends the class of a vector bundle ${\mathcal E}$ represented by the matrix $A$ to $\log |\det A|$. Since $\degg\circ c_1=\iden$, $c_1$ is injective.
\end{proof}
\begin{theor} \label{ktheory}
\begin{eqnarray*}
K^{0}(\widehat{\spec {\mathbb Z}^N})\simeq {\mathbb Z}\times \log {\mathbb Z}[N^{-1}]^{\ast}_{+}.
\end{eqnarray*}
\end{theor}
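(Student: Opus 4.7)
The proof is essentially an assembly of the machinery built up in the preceding lemmas, applied through the structure Lemma \ref{structurelemma}. My plan is the following.

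First, I would take $X = \widehat{\spec \mathbb{Z}^N}$ in Lemma \ref{structurelemma} and check each of its three hypotheses in turn. Proposition \ref{picprop} identifies $\pic(\widehat{\spec \mathbb{Z}^N})$ with $\log \mathbb{Z}[N^{-1}]^*_+$, so the ring $R$ of the lemma becomes $\mathbb{Z} \times \log \mathbb{Z}[N^{-1}]^*_+$ with the twisted multiplication. Lemma \ref{injlem} tells us that $c_1$ is a homomorphism (so $\varphi$ is a well-defined ring map) and that $c_1$ is injective. Lemma \ref{surjlem} tells us that $K^0(\widehat{\spec \mathbb{Z}^N})$ is generated by line bundles, giving surjectivity of $\varphi$.

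The one remaining hypothesis to verify is injectivity of the canonical map $\mathbb{Z} \to K^0(\widehat{\spec \mathbb{Z}^N})$ sending $1$ to $[\mathcal{O}]$. For this I would exhibit a left inverse. Any vector bundle over $\widehat{\spec \mathbb{Z}^N}$ has a well-defined rank by the discussion following Proposition \ref{picprop} (vector bundles are triples $(E_\mathbb{Z}, E_N, \theta_N)$ with $E_\mathbb{Z}$ free over $\mathbb{Z}$ and, by Theorem \ref{mainth}, $E_N$ free over $A_N$, and the glueing isomorphism $\theta_N$ forces the two ranks to coincide). Since the cofiber of a cofibration of vector bundles is again a vector bundle whose rank is the difference of ranks — this is immediate from the block-triangular description given in the excerpt — the rank descends to a homomorphism $\rk: K^0(\widehat{\spec \mathbb{Z}^N}) \to \mathbb{Z}$ which satisfies $\rk([\mathcal{O}]) = 1$. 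Composition with $\mathbb{Z} \to K^0$ is thus the identity, giving the desired injectivity.

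With all three hypotheses of Lemma \ref{structurelemma} in place, $\varphi$ is a bijective ring homomorphism, and pulling back through the explicit identification from Proposition \ref{picprop} yields
\begin{equation*}
K^0(\widehat{\spec \mathbb{Z}^N}) \simeq \mathbb{Z} \times \log \mathbb{Z}[N^{-1}]^*_+
\end{equation*}
as an isomorphism of abelian groups (and in fact of rings, with the twisted multiplication of Lemma \ref{structurelemma}). There is no real obstacle here — all the genuine content has been absorbed into Theorem \ref{mainth} (freeness of finitely generated projective $A_N$-modules) and into Lemmas \ref{surjlem} and \ref{injlem}, whose proofs rely on the normal form for matrices in $\oct_r \backslash \gl_r(\mathbb{Z}[N^{-1}]) / \gl_r(\mathbb{Z})$; the closest thing to a subtle point is simply being careful that the rank map is well-defined on $K^0$, which reduces to the block-triangular observation made in the excerpt.
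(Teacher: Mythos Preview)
Your proposal is correct and follows the paper's argument almost verbatim: invoke Lemma~\ref{structurelemma} with Proposition~\ref{picprop}, Lemma~\ref{surjlem}, and Lemma~\ref{injlem} supplying the hypotheses. The only difference is in the verification that ${\mathbb Z}\to K^0(\widehat{\spec {\mathbb Z}^N})$ is injective: you construct a left inverse via the rank map, whereas the paper obtains one by pulling back along $\spec{\mathbb Q}\to \widehat{\spec {\mathbb Z}^N}$ to land in $K^0(\spec{\mathbb Q})={\mathbb Z}$; these two splittings coincide, so the distinction is cosmetic.
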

\begin{proof}
Recall from proposition \ref{picprop} that $\pic(\widehat{\spec {\mathbb Z}^N})={\mathbb Z}[N^{-1}]^{\ast}_{+}$ and consider the map
\begin{eqnarray*}
\varphi: {\mathbb Z}\times \log {\mathbb Z}[N^{-1}]^{\ast}_{+}&\to& K^0(X),\\
\end{eqnarray*}
from lemma \ref{structurelemma}. Combining this lemma with lemma \ref{surjlem} and lemma \ref{injlem}, we conclude that $\varphi$ is a surjective ring homomorphism. To show injectivity, it remains to show that ${\mathbb Z}\to K^{0}(\widehat{\spec {\mathbb Z}^N})$ is injective. But this follows from the existence of a map $\spec {\mathbb Q}\to \widehat{\spec {\mathbb Z}^N}$, inducing a map $K^{0}(\widehat{\spec {\mathbb Z}^N})\to K^0(\spec {\mathbb Q})={\mathbb Z}$.

\end{proof}
\section*{Acknowledgments} \noindent The author wishes to thank Ryszard Nest and Henrik Densing Petersen.

\end{document}